\author{Felix Goldberg}
\address{Caesarea-Rothschild Institute, University of Haifa, Haifa, Israel}
\email{felix.goldberg@gmail.com}
\title{New results on eigenvalues and degree deviation}
\date{February 11, 2013}
\newtheorem{thm}{Theorem}[section]
\newtheorem{cor}[thm]{Corollary}
\newtheorem{lem}[thm]{Lemma}
\newtheorem{conj}[thm]{Conjecture}
\newtheorem{expl}[thm]{Example}
\newtheorem{rmrk}[thm]{Remark}
\DeclareMathOperator{\var}{var}
\theoremstyle{example}
\newtheoremstyle{example_contd}
{\topsep} {\topsep}%
{\upshape}
{}
{\bfseries\scshape}
{.}
{1em}
{\thmname{#1} \thmnumber{ #2}\thmnote{#3} (continued)}
\theoremstyle{example_contd}
\begin{document}

\begin{abstract}
Let $G$ be a graph. In a famous paper Collatz and Sinogowith had proposed to measure its deviation from regularity by the difference of the (adjacency) spectral radius and the average degree: $\epsilon(G)=\rho(G)-\frac{2m}{n}$.

We obtain here a new upper bound on $\epsilon(G)$ which seems to consistently outperform  the best known upper bound to date, due to Nikiforov. The method of proof may also be of independent interest, as we use notions from numerical analysis to re-cast the estimation of $\epsilon(G)$ as a special case of the estimation of the difference between Rayleigh quotients of proximal vectors.
\end{abstract}

\subjclass{05C50,05C07,15A42,91D30}

\keywords{irregularity, adjacency matrix, Perron vector, Perron value, mean degree, harmonic graph, spectral radius, Rayleigh quotient numerical analysis}

\thanks{{This research was supported by the Israel Science Foundation (grant number 862/10.)}}

\maketitle

\section{Introduction and main result}
Let $G$ be a connected graph with adjacency matrix $A$. Then $A$ has a Perron value $\rho$ and a positive Perron unit vector $v$ that satisfy
$$
Av=\rho v, ||v||_{2}=1.
$$

Suppose that the graph $G$ has $n$ vertices and $m$ edges. Then $\frac{2m}{n}$ is equal to the average vertex degree of $G$. The following classic result of Collatz and Sinogowitz relates it to the Perron value:
\begin{thm}\cite{ColSin57}\label{thm:cs}
Let $G$ be a connected graph with $n$ vertices and $m$ edges. Then 
\begin{equation}\label{eq:cs}
\rho \geq \frac{2m}{n}
\end{equation}
and equality holds if and only if $G$ is regular.
\end{thm}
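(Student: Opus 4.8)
The plan is to derive \eqref{eq:cs} from the Rayleigh--Ritz variational characterization of the largest eigenvalue of a real symmetric matrix. Since $A$ is symmetric, $\rho=\max_{x\neq 0}\frac{x^{\top}Ax}{x^{\top}x}$, with the maximum attained exactly on the eigenspace of $A$ belonging to $\rho$. Feeding in the all-ones vector $\mathbf{1}$ and using $\mathbf{1}^{\top}A\mathbf{1}=\sum_{i,j}A_{ij}=2m$ together with $\mathbf{1}^{\top}\mathbf{1}=n$ immediately yields $\rho\geq\frac{2m}{n}$.

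For the equality statement I would argue both implications. If $G$ is $d$-regular then $A\mathbf{1}=d\mathbf{1}$, so $d$ is an eigenvalue of $A$ with the strictly positive eigenvector $\mathbf{1}$; since $G$ is connected, $A$ is irreducible, so Perron--Frobenius forces $d=\rho$, while trivially $\frac{2m}{n}=d$, and \eqref{eq:cs} holds with equality. Conversely, if $\rho=\frac{2m}{n}$ then $\mathbf{1}$ attains the maximum in the Rayleigh quotient above, hence $\mathbf{1}$ lies in the $\rho$-eigenspace, i.e.\ $A\mathbf{1}=\rho\mathbf{1}$; reading off the $i$-th coordinate gives $\deg(i)=\rho$ for every $i$, so $G$ is regular.

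The one point that needs a little care is the claim, used in the converse direction, that the Rayleigh quotient equals $\rho$ only on eigenvectors for $\rho$: expanding the test vector in an orthonormal eigenbasis of $A$ exhibits $\frac{x^{\top}Ax}{x^{\top}x}$ as a convex combination of the eigenvalues weighted by the squared coordinates, so it reaches the largest eigenvalue only when all the weight sits on the corresponding eigenvectors. I do not expect any genuine obstacle here; the argument is short, and the only subtlety is invoking connectedness cleanly so that the ``if and only if'' is airtight (it enters once, via irreducibility, in the regular direction). I would also record, as an aside, the companion identity obtained from the Perron vector itself, $\rho\sum_i v_i=v^{\top}A\mathbf{1}=\sum_i \deg(i)\,v_i$, which displays $\rho$ as a $v$-weighted average of the degrees and is in the same spirit as the Rayleigh-quotient estimates developed in the sequel.
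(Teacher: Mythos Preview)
The paper does not supply its own proof of this theorem; it is quoted as the classical Collatz--Sinogowitz result and used only as the starting point for defining $\epsilon(G)$. So there is no in-paper argument against which to compare yours.

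Your proof is correct and is the standard one. The Rayleigh-quotient argument you give is, moreover, exactly in the spirit of the paper's later development: in proving Theorem~\ref{thm:main} the author takes $y=\mathbf{1}_{n}$ and records $\varrho(\mathbf{1}_{n})=\frac{2m}{n}$, so your choice of test vector is the natural one here as well. One small remark on the ``regular $\Rightarrow$ equality'' direction: you invoke Perron--Frobenius via connectedness, but you can dispense with this by noting that for any graph $\rho\leq\Delta$ (the maximum row-sum bound), so for a $d$-regular graph $d\leq\rho\leq\Delta=d$ forces $\rho=d$ without appealing to irreducibility. Either way the argument goes through, and your aside displaying $\rho$ as a $v$-weighted average of the degrees is a nice complement to the paper's viewpoint.
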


Theorem \ref{thm:cs} allows us to consider $\epsilon(G)=\rho-\frac{2m}{n}$ as a measure of the graph's irregularity. As such it has been studied by various authors \cite{Extremal16,Bel92,CioGre07,Nik06,Nik07}. For some alternative ways of measuring the irregularity see \cite{ElpWoc13}. We also call attention to \cite{Nik06walk} where \eqref{eq:cs} is placed in a wider context.

The inspiration for the present paper is given by the results of Nikiforov \cite{Nik06} who related $\epsilon(G)$ to two other natural measures of irregularity which are based on the degree sequence of $G$. These are in fact the first two moments of the degree sequence:
$$
s(G)=\sum_{u \in V(G)}{\Big|d_{u}-\frac{2m}{n}\Big|}
$$
and
$$
\var(G)=\frac{1}{n}\sum_{u \in V(G)}{\Big(d_{u}-\frac{2m}{n}\Big)^{2}},
$$
where $d_{u}$ is the degree of the vertex $u$.

As observed in \cite{Nik06}, $s(G)$ and $\var(G)$ are related:
$$
\frac{s^{2}(G)}{n^{2}} \leq \var(G)\leq s(G).
$$

Another interesting property of $\var(G)$ can be obtained from the Popoviciu inequality \cite{Pop35}:
$$
\var(G) \leq \frac{(\Delta(G)-\delta(G))^{2}}{4}.
$$



Our goal is to improve on the following result of Nikiforov:
\begin{thm}\cite{Nik06}\label{thm:nik}
Let $G$ be a graph. Then
\begin{equation}\label{eq:nik}
\frac{\var(G)}{2\sqrt{2m}} \leq \epsilon(G)\leq \sqrt{s(G)}.
\end{equation}
\end{thm}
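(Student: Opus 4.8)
I would attack the two inequalities separately.

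\emph{Lower bound.} The estimate $\dfrac{\var(G)}{2\sqrt{2m}}\le\epsilon(G)$ I would get from Hofmeister's inequality, which is nothing but the Rayleigh quotient of $A^{2}$ evaluated at the all-ones vector: since $\rho^{2}$ is the largest eigenvalue of the nonnegative matrix $A^{2}$,
$$
\rho^{2}\ \ge\ \frac{\mathbf 1^{\top}A^{2}\mathbf 1}{\mathbf 1^{\top}\mathbf 1}\ =\ \frac{\|A\mathbf 1\|_{2}^{2}}{n}\ =\ \frac1n\sum_{u}d_{u}^{2}\ =\ \var(G)+\Big(\tfrac{2m}{n}\Big)^{2}.
$$
Writing $\bar d=\tfrac{2m}{n}$ for the average degree, this gives
$$
\epsilon(G)=\rho-\bar d\ \ge\ \sqrt{\var(G)+\bar d^{\,2}}-\bar d\ =\ \frac{\var(G)}{\sqrt{\var(G)+\bar d^{\,2}}+\bar d},
$$
so it remains to bound the denominator by $2\sqrt{2m}$. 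This is elementary: $\sqrt{\var(G)+\bar d^{\,2}}=\sqrt{\tfrac1n\sum_u d_u^{2}}\le\sqrt{\tfrac1n\,\Delta\sum_u d_u}=\sqrt{\tfrac{2m\Delta}{n}}\le\sqrt{2m}$ because $\Delta\le n-1$, while $\bar d=\tfrac{2m}{n}\le\sqrt{2m}$ because $2m\le n^{2}$. (When $G$ is regular both sides vanish.) I foresee no difficulty here.

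\emph{Upper bound.} For $\epsilon(G)\le\sqrt{s(G)}$ I would first rewrite $\epsilon$ as a ratio attached to the Perron vector $v$. Testing $Av=\rho v$ against $\mathbf 1$ gives $\rho\,\mathbf 1^{\top}v=\mathbf 1^{\top}Av=(A\mathbf 1)^{\top}v=\sum_{u}d_{u}v_{u}$, hence
$$
\epsilon(G)=\rho-\bar d=\frac{\sum_{u}(d_{u}-\bar d)v_{u}}{\sum_{u}v_{u}}.
$$
Now apply the Cauchy--Schwarz inequality to the numerator with the weights $|d_{u}-\bar d|$:
$$
\Big(\sum_{u}(d_{u}-\bar d)v_{u}\Big)^{2}\ \le\ \Big(\sum_{u}|d_{u}-\bar d|\Big)\Big(\sum_{u}|d_{u}-\bar d|\,v_{u}^{2}\Big)\ =\ s(G)\sum_{u}|d_{u}-\bar d|\,v_{u}^{2}.
$$
This reduces the whole theorem to the single inequality $\sum_{u}|d_{u}-\bar d|\,v_{u}^{2}\le\bigl(\sum_{u}v_{u}\bigr)^{2}$, which I will call $(\star)$.

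\emph{Proving $(\star)$ --- the crux.} Inequality $(\star)$ genuinely needs the Perron structure of $v$, since it is false for an arbitrary positive unit vector (take one concentrated near a vertex of large degree). Two facts come for free from $Av=\rho v$ and $v>0$: termwise $v_{u}^{2}\le(\max_{w}v_{w})\,v_{u}$, and, letting $u^{\ast}$ attain $\max_{w}v_{w}$, $\rho\max_{w}v_{w}=\sum_{w\sim u^{\ast}}v_{w}\le\mathbf 1^{\top}v$, i.e.\ $\max_{w}v_{w}\le\rho^{-1}\mathbf 1^{\top}v$. Combining these two facts, $(\star)$ follows as soon as one proves
$$
\sum_{u}|d_{u}-\bar d|\,v_{u}\ \le\ \sum_{u}d_{u}v_{u}\quad(=\rho\,\mathbf 1^{\top}v),
$$
i.e.\ $\sum_{u}\bigl(d_{u}-|d_{u}-\bar d|\bigr)v_{u}\ge0$. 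Since $d_{u}-|d_{u}-\bar d|=2\min(d_{u},\bar d)-\bar d$, every vertex with $d_{u}\ge\bar d/2$ contributes a nonnegative term, so the only negative contributions come from vertices of degree strictly less than $\bar d/2=m/n$; in particular there are none when $\bar d\le2$ (trees and unicyclic graphs), where $(\star)$ is therefore immediate. In the remaining, dense, case I would bound $v_{u}$ at such low-degree vertices using $\rho v_{u}=\sum_{w\sim u}v_{w}$ together with $\max_{w}v_{w}\le\rho^{-1}\mathbf 1^{\top}v$, and show that their total negative contribution is absorbed by the $\bar d$-proportional positive mass that $v$ places on the above-average vertices. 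This last point --- an anti-concentration statement to the effect that the Perron vector cannot pile disproportionately onto the lowest-degree vertices --- is the only genuinely delicate step, and I expect essentially all the work of the upper bound to be concentrated there.
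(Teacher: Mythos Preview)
The paper does not prove Theorem~\ref{thm:nik} at all: it is quoted, with citation to Nikiforov, purely as background and as the benchmark against which the new bound (Theorem~\ref{thm:main}) is to be compared. So there is no ``paper's own proof'' to match your attempt against.

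On the substance of what you wrote: the lower bound is complete and correct. Hofmeister's inequality plus the elementary estimates $\sqrt{\tfrac1n\sum d_u^2}\le\sqrt{2m}$ and $\bar d\le\sqrt{2m}$ do the job cleanly.

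The upper bound, however, is unfinished. You reduce everything to $(\star)$, then further to $\sum_u|d_u-\bar d|\,v_u\le\sum_u d_u v_u$, and then stop, describing the remaining step as ``genuinely delicate'' and leaving it as an anti-concentration heuristic. That is a real gap: you have not supplied any mechanism that controls the Perron mass on low-degree vertices, and your sketch (``bound $v_u$ at such vertices using $\rho v_u=\sum_{w\sim u}v_w$'') does not by itself yield the required cancellation.

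What makes this frustrating is that the detour through Cauchy--Schwarz and $(\star)$ is unnecessary: the pieces you already have give the result directly. From your identity $\epsilon\sum_u v_u=\sum_u(d_u-\bar d)v_u$, drop the nonpositive terms and bound the rest by the maximum entry:
\[
\epsilon\sum_u v_u\ \le\ \sum_{d_u>\bar d}(d_u-\bar d)v_u\ \le\ \tfrac{s(G)}{2}\,\max_u v_u\ \le\ \tfrac{s(G)}{2}\cdot\frac{\sum_u v_u}{\rho},
\]
using $\sum_{d_u>\bar d}(d_u-\bar d)=s(G)/2$ and your own observation $\max_u v_u\le\rho^{-1}\sum_u v_u$. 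Hence $\epsilon\le s(G)/(2\rho)$, and since $\rho\ge\epsilon$ this yields $2\epsilon^{2}\le s(G)$, slightly stronger than what was claimed. The Cauchy--Schwarz step is the wrong turn; without it there is nothing delicate left.
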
 


Let $S=||v||_{1}$, that is the sum of the entries of the unit Perron eigenvector. Note that by Cauchy-Schwarz, $S^{2} \leq n$, with equality iff the graph $G$ is regular, and thus $S^{-1}$ may also serve as a measure of irregularity.

\begin{thm}\label{thm:main}
Let $G$ be a connected graph. Then
\begin{equation}\label{eq:main}
\epsilon(G) \leq \sqrt{\var(G)} \cdot \sqrt{\frac{n}{S^{2}}-1}.
\end{equation}
\end{thm}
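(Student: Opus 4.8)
The plan is to recast $\epsilon(G)$ as a difference of two Rayleigh quotients of the adjacency matrix $A$ and then to exploit the combinatorial structure of $A$. Write $u = \frac{1}{\sqrt{n}}\mathbf{1}$ for the normalized all-ones vector. Then $u^{T}Au = \frac{2m}{n}$, the mean degree, is the Rayleigh quotient of $A$ at $u$, while $\rho = v^{T}Av$ is its Rayleigh quotient at the Perron vector $v$; hence $\epsilon(G) = v^{T}Av - u^{T}Au$, a difference of Rayleigh quotients of the two unit vectors $v$ and $u$, which are proximal in the quantitative sense that $u^{T}v = \frac{S}{\sqrt{n}}$ tends to $1$ as $G$ tends to regularity.

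The first concrete step is the identity $\rho S = \sum_{i} d_{i} v_{i}$, where $d_{1},\dots,d_{n}$ are the vertex degrees: left-multiply $Av = \rho v$ by $\mathbf{1}^{T}$ and use $\mathbf{1}^{T}A = d^{T}$ together with $\mathbf{1}^{T}v = S$. Dividing by $S$ displays $\rho$ as the $v$-weighted average of the degrees, so that
$$
\epsilon(G) = \frac{\sum_{i} d_{i} v_{i}}{S} - \frac{\sum_{i} d_{i}}{n} = \frac{1}{S}\sum_{i}\Big(d_{i} - \frac{2m}{n}\Big) v_{i}.
$$

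The decisive maneuver — and the one point that requires genuine thought, being the main obstacle to reaching the stated sharp form — is to recenter the Perron vector before invoking Cauchy--Schwarz. Since $\sum_{i}\big(d_{i} - \frac{2m}{n}\big) = 0$, one may subtract from each $v_{i}$ the common mean $\frac{S}{n}$ of the entries of $v$ without altering the last sum; applying Cauchy--Schwarz directly to $\sum_{i}(d_{i} - \frac{2m}{n}) v_{i}$ would only produce the weaker estimate $\frac{\sqrt{n\,\var(G)}}{S}$, so performing this centering is essential. After centering, Cauchy--Schwarz yields
$$
\epsilon(G) \le \frac{1}{S}\sqrt{\sum_{i}\Big(d_{i} - \frac{2m}{n}\Big)^{2}}\,\sqrt{\sum_{i}\Big(v_{i} - \frac{S}{n}\Big)^{2}}.
$$

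It remains to evaluate the two sums of squares. The first equals $n\,\var(G)$ by definition. The second equals $\|v\|_{2}^{2} - \frac{S^{2}}{n} = 1 - \frac{S^{2}}{n}$, since $\sum_{i} v_{i} = S$ and $\|v\|_{2} = 1$. Substituting these and simplifying via $\frac{1}{S}\sqrt{n}\cdot\sqrt{1 - \frac{S^{2}}{n}} = \sqrt{\frac{n}{S^{2}} - 1}$ gives \eqref{eq:main}. Finally, equality in the Cauchy--Schwarz step forces the sequences $d_{i} - \frac{2m}{n}$ and $v_{i} - \frac{S}{n}$ to be proportional, which for a connected graph occurs exactly when $G$ is regular, in agreement with Theorem \ref{thm:cs}.
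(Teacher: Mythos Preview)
Your argument is correct and yields exactly the inequality of the theorem. The route, however, differs from the paper's: the paper treats $\epsilon(G)$ as $|\varrho(v)-\varrho(\mathbf{1}_n)|$ and invokes the \emph{a posteriori} Rayleigh--quotient bound $|\lambda-\varrho(y)|\le \frac{\|r(y)\|}{\|y\|}\tan\angle(x,y)$ from numerical analysis as a black box, then identifies $\frac{\|r(\mathbf{1}_n)\|}{\|\mathbf{1}_n\|}=\sqrt{\var(G)}$ and $\cos\angle(v,\mathbf{1}_n)=S/\sqrt{n}$. You instead derive the identity $\epsilon(G)=\frac{1}{S}\sum_i(d_i-\tfrac{2m}{n})v_i$, center $v$, and apply Cauchy--Schwarz directly. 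In effect you have given a self-contained proof of the a posteriori bound in this particular instance; your centering step is precisely the orthogonal projection that converts the $\sin$ in the a priori bound into the $\tan$ in the a posteriori one. The payoff of your approach is that it is elementary and needs no external reference; the payoff of the paper's approach is that it situates the result within a general framework and explains \emph{why} one should expect such a bound.

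One correction: your final sentence on the equality case is wrong. Proportionality of $d_i-\tfrac{2m}{n}$ and $v_i-\tfrac{S}{n}$ does \emph{not} force regularity. The paper itself shows that equality holds in \eqref{eq:main} for every complete bipartite graph $K_{p,q}$ (and also for the harmonic graphs $T_k$), none of which are regular when $p\ne q$. Indeed, for $K_{p,q}$ both centered sequences take only two distinct values with ratio $-q/p$, so Cauchy--Schwarz is tight. You should either drop the equality remark or characterize it correctly.
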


The proof requires a brief detour into the field of numerical analysis, taking \cite[Section 2]{ZhuArgKny13} as our benevolent guide. Let $M$ be a real symmetric matrix and $x \neq 0$ a (real) vector. The \emph{Rayleigh quotient} is 
$$
\varrho(x)=\frac{x^{T}Mx}{x^{T}x}.
$$
It is well-known that the eigenvalues of $M$ are precisely the stationary points of $\varrho(\cdot): \mathbb{R}^{n} \rightarrow \mathbb{R}$. 

Suppose now that $Ax=\lambda x$, so that $\varrho(x)=\lambda$. Suppose also that $y$ is a vector lying close to $x$. We can expect by the continuity of $\varrho(\cdot)$ that $\varrho(x)$ will be close to $\varrho(y)$. Since the function $\varrho(\cdot)$ is homogenous, the useful way to measure proximity of vectors will be by the angle between $x$ and $y$:
$$
\angle(x,y)=\arccos{\frac{|\left< x,y \right>|}{||x||_{2} \cdot ||y||_{2}}}.
$$

There are two ways of making this statement precise: the \emph{a priori} bound \eqref{eq:apriori} and the \emph{a posteriori} bound \eqref{eq:aposteriori}. The latter bound uses the residual vector $r(y)=Ay-\varrho(y)y$.

\begin{equation}\label{eq:apriori}
|\lambda-\varrho(y)| \leq (\lambda_{\max}(M)-\lambda_{\min}(M)) \cdot \sin^{2}(\angle(x,y)).
\end{equation}

\begin{equation}\label{eq:aposteriori}
|\lambda-\varrho(y)| \leq \frac{||r(y)||}{||y||} \cdot \tan(\angle(x,y)).
\end{equation}

It is not possible to tell in advance which of the bounds will turn out more useful for a particular problem. For our purposes the a posteriori works much better, so we will henceforth focus on it.

Let us take $M=A$ and $x=v$ and $y=\mathbf{1}_{n}$. Then we have that $\lambda=\rho(G)$ and $\varrho(y)=\frac{2m}{n}$.
The residual vector $r(y)$ is:
$$
r(y)=Ay-\varrho(y)y=d-\frac{2m}{n}.
$$
Therefore
\begin{equation}\label{eq:aux1}
\frac{||r(y)||}{||y||}=\sqrt{\var(G)}.
\end{equation}
On the other hand, the cosine of the angle between $v$ and $\mathbf{1}_{n}$ is:
\begin{equation}\label{eq:aux2}
\cos \angle(v,\mathbf{1}_{n})= \frac{|\left< v,\mathbf{1}_{n} \right>|}{||v||_{2} \cdot ||\mathbf{1}_{n}||_{2}}=\frac{S}{\sqrt{n}}.
\end{equation}

The claim of Theorem \ref{thm:main} now follows from (\ref{eq:aposteriori},~\ref{eq:aux1},~\ref{eq:aux2}). \qed

\begin{rmrk}
Extensive numerical calculations indicate that the bound of \eqref{eq:main} is stunningly close to the true value of $\epsilon(G)$ in all cases examined. However, the actual estimation of $S^{2}$ on which the bound depends is often very difficult. Therefore, we are willing to settle for a weaker bound: $\epsilon(G) \leq \sqrt{\var(G)}$ which would still improve upon Theorem \ref{thm:nik}. This fails to be true for disconnected graphs but we strongly believe that it is true for connected graphs.
\end{rmrk}

\begin{conj}\label{conj:var}
If $G$ is connected, then
$$
\epsilon(G) \leq \sqrt{\var(G)}.
$$
\end{conj}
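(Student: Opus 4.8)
The natural route to Conjecture~\ref{conj:var} is to extract it from Theorem~\ref{thm:main}. Since \eqref{eq:main} reads $\epsilon(G)\le\sqrt{\var(G)}\cdot\sqrt{n/S^{2}-1}$, the conjecture follows at once from the purely spectral inequality
$$
S^{2}=||v||_{1}^{2}\ \ge\ \frac{n}{2}
$$
for every connected $G$ (equivalently $\cos^{2}\angle(v,\mathbf{1}_{n})\ge\tfrac12$). So the plan would be: first establish this lower bound on the $\ell_{1}$-mass of the Perron vector, exploiting connectedness --- for instance via power-iteration or walk-counting representations of $v$, or via the classical lower bounds on $\min_{u}v_{u}$ in terms of $\rho$, $n$ and $\Delta$ --- and then feed it into \eqref{eq:main}.

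The hard part is that this plan cannot succeed, because $S^{2}\ge n/2$ is false: the $\ell_{1}$-mass of the Perron vector of a connected graph need not grow with $n$ at all. Let $L_{k,\ell}$ denote the graph obtained from $K_{k}$ by attaching a pendant path on $\ell$ vertices, $k$ fixed. The Perron vector of $L_{k,\ell}$ decays geometrically along the path, so $S=||v||_{1}$ stays bounded as $\ell\to\infty$ while $n=k+\ell\to\infty$; hence $n/S^{2}\to\infty$ and the factor $\sqrt{n/S^{2}-1}$ in \eqref{eq:main} tends to infinity. In particular no estimate of the form $\epsilon(G)\le C\sqrt{\var(G)}$ with $C$ an absolute constant can be squeezed out of Theorem~\ref{thm:main}.

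Worse, the very same family refutes Conjecture~\ref{conj:var}. It is just the connected analogue of the disconnected counterexample alluded to in the Remark, with the pendant path playing the role of the isolated vertices: it keeps the graph connected yet pins almost every degree to the mean. Indeed, for $L_{k,\ell}$ one has $\tfrac{2m}{n}\to 2$, while Cauchy interlacing applied to the principal submatrix $A(K_{k})$ gives $\rho(L_{k,\ell})\ge\rho(K_{k})=k-1$; hence $\epsilon(L_{k,\ell})\ge k-3-o(1)$, a positive constant as soon as $k\ge 4$. On the other hand all but $k+1$ of the $n$ vertices have degree exactly $2$, so $\sum_{u}\bigl(d_{u}-\tfrac{2m}{n}\bigr)^{2}=O(k^{3})$ and $\var(L_{k,\ell})=O(k^{3}/n)\to 0$. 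Therefore $\epsilon(L_{k,\ell})>\sqrt{\var(L_{k,\ell})}$ for all large $\ell$; already $L_{5,15}$ does it, with $\epsilon\ge\tfrac32$ (since $\rho\ge\rho(K_{5})=4$ by interlacing and $\tfrac{2m}{n}=\tfrac52$) while $\sqrt{\var}=\sqrt{21/20}<1.03$.

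So the honest outcome of this proposal is a disproof rather than a proof. The moral is that $\var(G)$ is the wrong functional: it is hypersensitive to pendant paths, being $O(1/n)$, whereas $\epsilon(G)$ is not, and the factor $\sqrt{n/S^{2}-1}$ in Theorem~\ref{thm:main} is genuinely necessary. Any salvage would have to use an irregularity measure that does not collapse under this sparsification --- e.g.\ Nikiforov's $s(G)$, for which the bound $\epsilon(G)\le\sqrt{s(G)}$ does hold.
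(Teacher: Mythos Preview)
The paper does not prove this statement: it is posed as an open conjecture, with only the special case $\omega(G)\ge n/2$ established (Corollary~\ref{cor:w}) and the observation that the conjecture is equivalent to $S^{2}\ge n/2$. So there is no proof in the paper to compare your proposal against.

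More to the point, your proposal is not a proof but a \emph{disproof}, and it is correct. The lollipop family $L_{k,\ell}$ refutes both the auxiliary inequality $S^{2}\ge n/2$ and Conjecture~\ref{conj:var} itself. For fixed $k\ge 4$ and $\ell\to\infty$ the graph is connected, interlacing against the induced $K_{k}$ gives $\rho\ge k-1$, and $2m/n\to 2$, so $\epsilon\ge k-3-o(1)>0$. Meanwhile all but $k+1$ vertices have degree exactly $2$, the contribution $(\ell-1)(2-\bar d)^{2}$ is $O(1/\ell)$ since $2-\bar d=k(3-k)/(k+\ell)$, and hence $\var(G)=O(k^{3}/n)\to 0$. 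Your concrete witness $L_{5,15}$ checks out numerically: $n=20$, $m=25$, $\bar d=5/2$, $\rho\ge 4$ so $\epsilon\ge 3/2$, while the degree sequence $(4^{4},5,2^{14},1)$ gives $\sum(d_{u}-\bar d)^{2}=21$ and $\sqrt{\var}=\sqrt{21/20}<1.03$. The argument that $S$ stays bounded (geometric decay of the Perron entries along the tail, from the recurrence $\rho v_{i}=v_{i-1}+v_{i+1}$ with $\rho>2$) is also sound, so the reduction $S^{2}\ge n/2$ fails for the same reason.

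You have therefore gone beyond the paper and settled its conjecture in the negative.
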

Clearly, the conjecture is equivalent to $$S^{2}\geq^{?} \frac{n}{2}.$$

\section{First examples - exact computation of $S^2$}
In order to demonstrate the strength of Theorem \ref{thm:main} we would like to consider fist a number of examples in which the Perron vector $v$ can be easily computed explicitly, and therefore a formula for $S^{2}$ can be written down. 

Later we will develop some ways of estimating $S^{2}$ from below in cases where explicit expressions for $v$ 
are not available or are too intimidating to be effectively used.



\subsection{Bicliques}
Let $G=K_{p,q}$ be a complete bipartite graph, with $p \leq q$. It is not hard to compute that $\rho(G)=\sqrt{pq}$ and $$\epsilon(G)=\sqrt{pq}-\frac{2pq}{q+p}.$$
Nikiforov's estimate is:
$$
\epsilon(G) \leq \sqrt{s(G)}=\sqrt{2pq\frac{(q-p)}{q+p}},
$$
which has the correct order of magnitude but is off by multiplicative and additive constants. Let us now compute the bound of Theorem \ref{thm:main}:
$$
\sqrt{\var(G)}=\frac{(q-p)}{q+p}\sqrt{pq},
$$
and the Perron vector of 
$$
A(G)=\left[
        \begin{array}{cc}
           J_{p}& 0\\
					0&J_{q}
        \end{array}
    \right]
$$
is easily verified to be
$$
v=\left[
        \begin{array}{c}
           \frac{1}{\sqrt{2q}} \cdot j_{p}\\
           \frac{1}{\sqrt{2p}} \cdot j_{q}
        \end{array}
    \right].
$$
Therefore $S=\frac{1}{\sqrt{2}}(\sqrt{q}+\sqrt{p})$ and
$$
\sqrt{\frac{n}{S^{2}}-1}=\frac{\sqrt{q}-\sqrt{p}}{\sqrt{q}+\sqrt{p}}.
$$
Finally, a simple algebraic manipulation will show that in this case equality obtains in \eqref{eq:main} and thus our bound is sharp.

\subsection{Harmonic graphs}
A graph is called \emph{harmonic} \cite{DreGut03,Gru02} if for some real $\lambda$ the equality $\lambda d_{v_{i}}=\sum_{j \sim i}{d_{v_{j}}}$ holds for all $i=1,2,\ldots,n$. This is clearly equivalent to
$$\rho=\lambda, \quad v=c \cdot d, c>0.$$ In this case we can evaluate the term $S$ precisely.

Let us define the quantity 
$$Z_{G}=\sum_{v \in V(G)}{d_{v}^{2}}$$ 
(cf. e.g. \cite{Nik07pow,GutDas04}). Then 
\begin{equation}\label{eq:h0}
v=\sqrt{Z_{G}}\cdot d
\end{equation}
for a harmonic graph.
\begin{thm}
Let $G$ be a harmonic graph on $n$ vertices and with $m$ edges. Then
\begin{equation}\label{eq:harm}
\epsilon(G) \leq \sqrt{\var(G)} \cdot \sqrt{\frac{nZ_{G}}{4m^{2}}-1}.
\end{equation}
\end{thm}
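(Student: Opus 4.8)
The plan is to obtain \eqref{eq:harm} as an immediate corollary of Theorem \ref{thm:main}, the only real work being to evaluate $S^{2}$ explicitly for a harmonic graph. First I would use the defining property recalled above: in a harmonic graph the Perron vector is proportional to the degree sequence, $v = c \cdot d$ with $c > 0$. The normalization $\|v\|_{2} = 1$ then forces $c^{2}\sum_{u \in V(G)} d_{u}^{2} = 1$, i.e. $c = Z_{G}^{-1/2}$; this is exactly the content of \eqref{eq:h0} (up to the evident misprint there).

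Next I would compute $S = \|v\|_{1} = c\sum_{u \in V(G)} d_{u} = c\cdot 2m = \dfrac{2m}{\sqrt{Z_{G}}}$, invoking the handshake identity $\sum_{u} d_{u} = 2m$. Squaring gives $S^{2} = \dfrac{4m^{2}}{Z_{G}}$, whence
$$
\frac{n}{S^{2}} - 1 = \frac{nZ_{G}}{4m^{2}} - 1.
$$
Substituting this into the bound \eqref{eq:main} of Theorem \ref{thm:main} yields \eqref{eq:harm} directly.

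Since the argument is a single substitution, there is no genuine obstacle. The one point worth flagging is well-definedness of the right-hand side: Cauchy--Schwarz gives $(2m)^{2} = \bigl(\sum_{u} d_{u}\bigr)^{2} \leq n\sum_{u} d_{u}^{2} = nZ_{G}$, so the radicand $\frac{nZ_{G}}{4m^{2}} - 1$ is nonnegative, vanishing precisely when $G$ is regular, consistent with Theorem \ref{thm:cs}. If one wished, one could then compare \eqref{eq:harm} with $\sqrt{s(G)}$ on a sample harmonic family to exhibit the improvement, but that is not needed for the statement itself.
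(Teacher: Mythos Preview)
Your proposal is correct and follows exactly the paper's approach: compute $S^{2}=4m^{2}/Z_{G}$ from the fact that the Perron vector of a harmonic graph is the normalized degree vector, then substitute into Theorem \ref{thm:main}. Your added remarks (the typo in \eqref{eq:h0} and the Cauchy--Schwarz check that the radicand is nonnegative) are accurate and go slightly beyond what the paper writes, but the core argument is identical.
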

\begin{proof}
By \eqref{eq:h0} we have $$S^{2}=\frac{4m^{2}}{Z_{G}}.$$
\end{proof}


\begin{expl}
Consider a family of $3$-harmonic graphs, constructed in \cite{BorGryGutPet03}. See Figure \ref{fig:tk}. The graph $T_{k}$ has $n=3k$ vertices and $m=4k$ edges. It has $k$ vertices of degree $4$ and $2k$ vertices of degree $2$, therefore $\frac{2m}{n}=\frac{8}{3}$ and $\epsilon(G)=\frac{1}{3}$.

Nikiforov's estimate \eqref{eq:nik} gives: 
$$\epsilon(G) \leq \sqrt{s(G)}=\sqrt{\frac{8k}{3}},$$
failing to flesh out the fact that $\epsilon(G)$ is constant for the whole family. 

On the other hand, $Z_{G}=24k$ and $\var(G)=\frac{8}{9}$ and thus from \eqref{eq:harm}:
$$
\epsilon(G) \leq \sqrt{\var(G)} \cdot \sqrt{\frac{nZ_{G}}{4m^{2}}-1} = \sqrt{\frac{1}{8}}\cdot\sqrt{\frac{8}{9}}=\frac{1}{3}.
$$
So once again, equality holds.
\end{expl}
 
\begin{figure}[h]
\begin{center}$
\begin{array}{cc}
\includegraphics[width=2.5in]{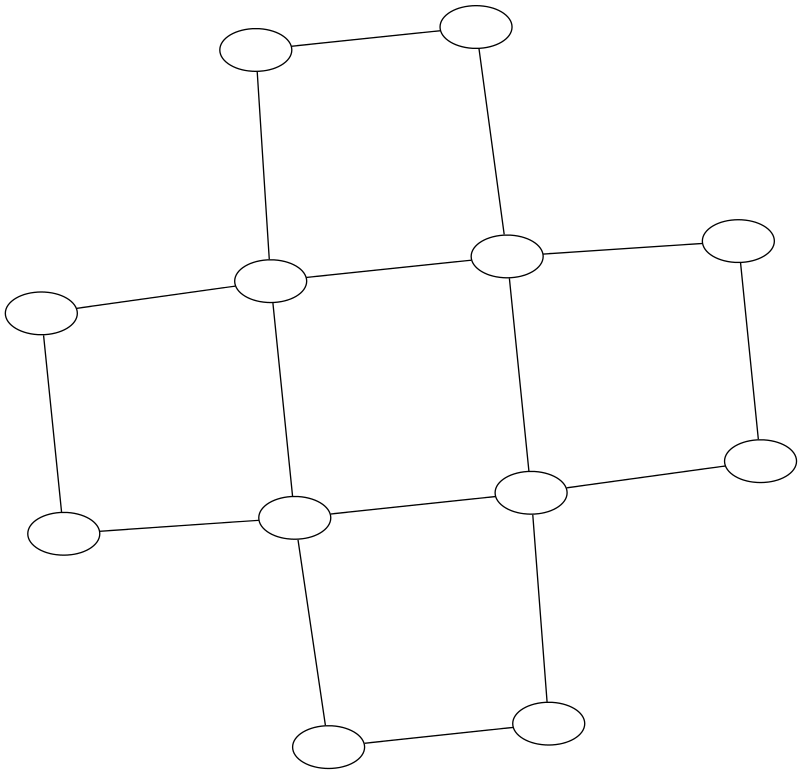} &
\includegraphics[width=2.5in]{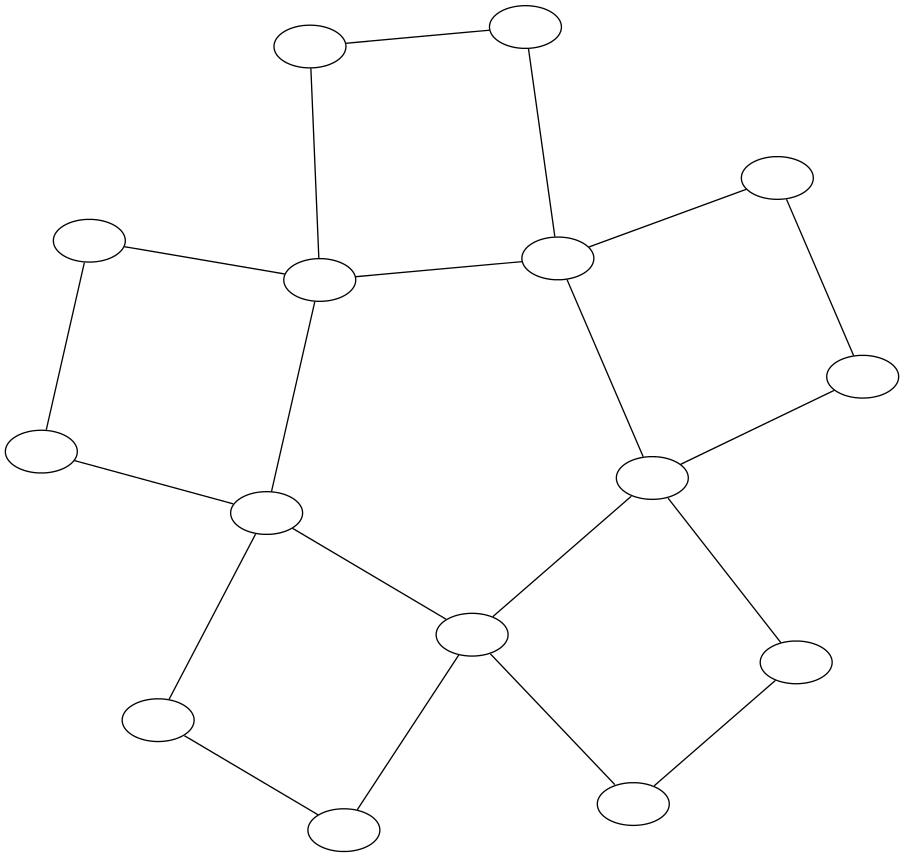}
\end{array}$
\end{center}
\caption{$T_{4}$ and $T_{5}$}\label{fig:tk}
\end{figure}

\section{Estimating $S^{2}$ by the Wilf method}

Recall the classic result due to Wilf:  
\begin{thm}\cite{Wilf86}\label{thm:wilf}
Let $G$ be a graph with clique number $\omega$ and spectral radius $\rho$. Then 
\begin{equation}\label{eq:wilf}
S^{2} \geq \frac{w}{w-1}\rho.
\end{equation}
\end{thm}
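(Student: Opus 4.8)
The plan is to recover Wilf's bound \eqref{eq:wilf} from the variational characterization of $\rho$ together with the combinatorial structure forced by the clique number. First I would recall that since $G$ has clique number $\omega$, a theorem of Motzkin and Straus expresses the clique number through the quadratic form of the adjacency matrix: the maximum of $x^{T}Ax$ over the standard simplex $\{x \geq 0, \ \mathbf{1}^{T}x = 1\}$ equals $1 - \frac{1}{\omega}$. The idea is to feed the Perron vector, suitably normalized to lie on the simplex, into this inequality. Concretely, set $w = v/S$, so that $w \geq 0$ and $\mathbf{1}^{T}w = 1$; then $w$ is a feasible point for the Motzkin--Straus program, whence $w^{T}Aw \leq 1 - \frac{1}{\omega}$.

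Next I would compute $w^{T}Aw$ explicitly using the eigen-equation. Since $Av = \rho v$ and $\|v\|_{2} = 1$, we get $v^{T}Av = \rho$, and therefore $w^{T}Aw = \frac{1}{S^{2}} v^{T}Av = \frac{\rho}{S^{2}}$. Combining with the Motzkin--Straus inequality gives $\frac{\rho}{S^{2}} \leq 1 - \frac{1}{\omega} = \frac{\omega - 1}{\omega}$, which rearranges at once to $S^{2} \geq \frac{\omega}{\omega - 1}\rho$, i.e.\ exactly \eqref{eq:wilf} (with $w$ in the statement being $\omega$). The argument is short and the only genuine input is the Motzkin--Straus theorem; the rest is algebraic bookkeeping with the Perron vector.

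The main obstacle, such as it is, lies in the Motzkin--Straus step: one must be slightly careful that $w$ is a legitimate point of the simplex — this needs $v > 0$ entrywise, which holds because $G$ is connected so that $A$ is irreducible and the Perron--Frobenius theorem guarantees a strictly positive Perron vector; and it needs $S = \|v\|_{1} > 0$, which is immediate. If one prefers not to invoke Motzkin--Straus, an alternative route is to run a direct averaging/greedy argument: project the Perron vector onto a maximum clique and use the Rayleigh quotient restricted to that clique, but this essentially re-proves Motzkin--Straus in the special case needed, so citing the theorem is cleaner. I would also note in passing that equality in \eqref{eq:wilf} is tied to $G$ being a disjoint union of a clique $K_{\omega}$ with isolated vertices (or, for connected $G$, to $G = K_{\omega}$ itself), since that is precisely when the Perron vector is supported on and constant over a maximum clique — though proving the equality case is not required for the statement as given.
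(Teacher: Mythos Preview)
The paper does not actually supply a proof of this theorem: it is stated as a cited result from \cite{Wilf86} and used as a black box. So there is no in-paper proof to compare your proposal against.

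That said, your argument is correct and is essentially Wilf's original proof. The Motzkin--Straus theorem gives $\max\{x^{T}Ax : x \geq 0,\ \mathbf{1}^{T}x = 1\} = 1 - \tfrac{1}{\omega}$, and plugging in the normalized Perron vector $w = v/S$ yields $\rho/S^{2} \leq (\omega-1)/\omega$, which is \eqref{eq:wilf}. One small remark: you do not actually need strict positivity of $v$ for the Motzkin--Straus step, since the simplex constraint is $x \geq 0$; a nonnegative Perron eigenvector suffices, so the connectedness hypothesis is not essential to the argument (though it is the standing assumption in the paper). Your aside about the equality case is tangential to the statement and could be dropped.
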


Wilf's result is in many cases sufficiently powerful to obtain, in conjunction with Theorem \ref{thm:main}, excellent estimates on $\epsilon(G)$. In particular we can use it to prove our Conjecture \ref{conj:var} in a special case:
\begin{cor}\label{cor:w}
If $G$ is a connected graph on $n$ vertices with $\omega(G) \geq \frac{n}{2}$, then
$$
\epsilon(G) \leq \sqrt{\var(G)}.
$$
\end{cor}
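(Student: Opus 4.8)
The plan is to combine Theorem~\ref{thm:main}, in the equivalent form noted just after Conjecture~\ref{conj:var}, with Wilf's inequality~\eqref{eq:wilf}. Recall that the bound $\epsilon(G) \le \sqrt{\var(G)}$ holds for a connected graph $G$ precisely when $S^2 \ge \frac{n}{2}$. So it suffices to verify this single inequality under the hypothesis $\omega(G) \ge \frac{n}{2}$.

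First I would dispose of the degenerate case. If $\omega(G) = 1$ then $G$ has no edges, so connectedness forces $n = 1$, whence $\epsilon(G) = \var(G) = 0$ and there is nothing to prove. Hence we may assume $\omega := \omega(G) \ge 2$, so that $\frac{\omega}{\omega - 1}$ is a well-defined positive quantity.

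Next, since $G$ contains $K_\omega$ as a subgraph and the Perron value is monotone under passing to subgraphs, we have $\rho = \rho(G) \ge \rho(K_\omega) = \omega - 1$. Substituting this into~\eqref{eq:wilf} yields
$$
S^2 \ge \frac{\omega}{\omega - 1}\,\rho \ge \frac{\omega}{\omega - 1}(\omega - 1) = \omega \ge \frac{n}{2},
$$
which is exactly the inequality we wanted; the corollary then follows from Theorem~\ref{thm:main}.

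There is essentially no hard step here: the only points requiring a little care are the trivial case $\omega = 1$ and the (standard) fact that $\rho(G) \ge \omega(G) - 1$, which one may wish to cite or simply note as a consequence of subgraph monotonicity of the spectral radius. It is worth remarking that the argument actually proves the sharper statement $S^2 \ge \omega(G)$, and that the hypothesis $\omega(G) \ge \frac{n}{2}$ is invoked only at the very last inequality.
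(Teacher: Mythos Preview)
Your proof is correct and follows essentially the same route as the paper: use subgraph monotonicity of the spectral radius to get $\rho \ge \omega - 1$, feed this into Wilf's bound to obtain $S^{2} \ge \omega \ge \tfrac{n}{2}$, and conclude via Theorem~\ref{thm:main}. Your treatment is slightly more detailed (handling the degenerate $\omega = 1$ case and spelling out the final appeal to Theorem~\ref{thm:main}), but the argument is the same.
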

\begin{proof}
Since the spectral radius is monotone with respect to subgraphs (cf. \cite[p. 33]{Spectra_BH}) we have $\rho \geq \omega-1$. Therefore $S^{2} \geq \omega \geq \frac{n}{2}$.
\end{proof}

In the remainder of this section we will study a particular example.

\subsection{Pineapples}
The pineapple graph $P(n,q)$ consists of a clique on $q$ vertices and a stable set on $n-q$ vertices, so that one particular vertex in the clique in adjacent to all the vertices in the stable set. Pineapple graphs have high values of $\epsilon(G)$ and in fact have been conjectured to be its maximizers:
\begin{conj}\cite{Extremal16}
Among all graphs on $n \geq 10$ vertices the graph $G$ with the highest value of $\epsilon(G)$ is $G=PA(n,q), q=\lfloor \frac{n}{2} \rfloor +1$.
\end{conj}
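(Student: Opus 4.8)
This is an open conjecture; here I sketch a possible line of attack, combining two classical spectral reductions with the estimate \eqref{eq:main}. At the outset one should note that ``graph'' must mean \emph{connected} graph: attaching isolated vertices to a clique leaves $\rho$ unchanged while strictly decreasing $\frac{2m}{n}$, so $K_q\cup(n-q)K_1$ already beats $P(n,q)$ and the statement is simply false without connectivity.

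\textbf{Step 1 (reduce to threshold graphs).} Fix $n$ and observe that among all connected graphs with exactly $m$ edges the term $\frac{2m}{n}$ is constant, so maximizing $\epsilon$ there is the same as maximizing $\rho$. The first move is to invoke the Brualdi--Hoffman/Rowlinson circle of results: among all graphs on $n$ vertices with $m$ edges, $\rho$ is maximized by a threshold graph, the one whose edge set forms an initial segment in the staircase order. I would then pass to the connected analogue of this statement --- forcing connectivity changes the extremal graph, but it should remain a threshold graph --- so that the global maximizer of $\epsilon$ lies in the finite and highly structured family of threshold graphs on $n$ vertices, each encoded by a binary creation sequence.

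\textbf{Step 2 (reduce to pineapples, then optimize in $q$).} Within threshold graphs I would apply local ``shape moves'' to the creation sequence --- Kelmans-type edge rotations --- and track their effect on $\rho$ via eigenvalue interlacing and the Perron vector (the Rayleigh-quotient viewpoint of the introduction is handy here), $\frac{2m}{n}$ being affected only through $m$. The aim is to show that a threshold graph which is not a pineapple always admits such a move strictly increasing $\epsilon$, so the maximizer is $P(n,q)$ for some $q$. To pin down $q$, use that $P(n,q)$ has an equitable three-cell partition --- the universal vertex; the remaining $q-1$ clique vertices; the $n-q$ leaves --- whence $\rho(P(n,q))$ is the largest root of $x^{3}-(q-2)x^{2}-(n-1)x+(q-2)(n-q)=0$, while $\frac{2m}{n}=\frac{q(q-1)+2(n-q)}{n}$. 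Differentiating $\rho$ implicitly in $q$ through this cubic, one should find $\epsilon(P(n,q))$ unimodal with interior maximum near $q=\tfrac{n}{2}$; comparing the two or three nearest integers --- where \eqref{eq:main} together with Wilf's inequality \eqref{eq:wilf} gives a cheap handle on $\epsilon$ --- should isolate $q^{\star}=\lfloor n/2\rfloor+1$, the hypothesis $n\ge 10$ absorbing the finitely many exceptional small graphs, to be dispatched by computer.

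\textbf{Main obstacle.} The genuine difficulty is Steps~1--2. The Brualdi--Hoffman-type theorem is usually stated for possibly disconnected graphs with isolated vertices allowed, and its connected form --- that concentrating all the ``excess'' edges on a single clique vertex, which is exactly what produces a pineapple, beats every competitor --- is not routine, particularly for edge counts $m$ far from the clean value $\binom{q}{2}+(n-q)$. Carrying out the perturbation and interlacing arguments \emph{uniformly in $m$} is where I expect the proof to turn technical. It is conceivable that a cleaner route sidesteps Steps~1--2, bounding $\epsilon$ of every non-pineapple from above via \eqref{eq:main} and $\epsilon(P(n,q^{\star}))$ from below; but the catch is that \eqref{eq:main}, supplied only with crude inputs (Popoviciu's inequality for $\var(G)$, and $S\ge 1$ for $S^{2}$), is far too lossy to separate the pineapple from its near-competitors, so one would first need substantially sharper lower bounds on $S^{2}$ than Wilf's.
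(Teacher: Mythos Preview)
The paper does not prove this statement: it is quoted verbatim as an open conjecture from \cite{Extremal16} and then used only to motivate the ensuing example, where the bounds \eqref{eq:nik} and \eqref{eq:main} are evaluated on $P(2k,k+1)$. There is therefore no ``paper's own proof'' to compare your proposal against, and your opening sentence---that this is an open conjecture for which you can at best sketch a line of attack---is exactly the right assessment.

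A couple of remarks on the sketch itself. Your connectivity caveat is well taken in spirit, but the specific claim that $K_{q}\cup(n-q)K_{1}$ ``already beats'' $P(n,q)$ is not obvious: passing from the former to the latter adds $n-q$ edges, which increases $\tfrac{2m}{n}$ by $\tfrac{2(n-q)}{n}$, but it also strictly increases $\rho$ above $q-1$, and one must actually check which effect dominates before asserting the conjecture fails for disconnected graphs. Your two-step programme (reduce to threshold graphs via Brualdi--Hoffman/Rowlinson, then to pineapples via Kelmans-type moves, then optimize the cubic in $q$) is the natural plan and is indeed the one pursued in the literature surrounding \cite{Extremal16}; you have also correctly identified the genuine obstacle, namely that the connected version of the spectral-radius-maximizer theorem is not routine and that the bound \eqref{eq:main} alone is too coarse to separate the pineapple from near-competitors. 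None of this is addressed in the present paper, which makes no attempt on the conjecture.
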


\begin{figure}[h]
\begin{center}
\includegraphics[width=2.5in]{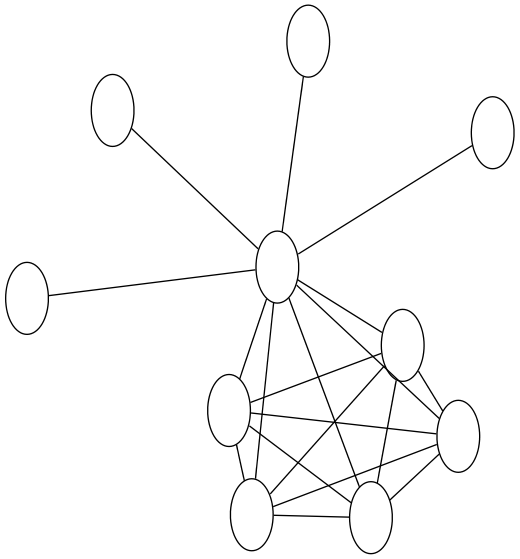} 
\end{center}
\caption{The graph $P(10,6)$}
\end{figure}

\begin{expl}
Consider the graph $G=P(2k,k+1)$. Nikiforov's estimate is:
$$
\epsilon(G) \leq \sqrt{s(G)}=\sqrt{\frac{k^{3}-3k+2}{k}}.
$$
\end{expl}

Since $\omega=k+1 \geq k=\frac{n}{2}$ we can use Corollary \ref{cor:w} to obtain:
$$
\epsilon(G) \leq \sqrt{\var(G)} = \frac{k-1}{2k} \cdot \sqrt{k^2+4k-4}.
$$

Thus an improvement by a factor of two is gained.


\section{Estimating $S^{2}$ for cones}\label{sec:dom}

Let us now consider the case when $\Delta(G)=n-1$, \emph{i.e.} when some vertex is adjacent to all other vertices. Such a vertex is called \emph{dominating} or \emph{universal}. Denote by $H$ the subgraph obtained by deleting $v$ and all edges incident upon it from $G$. Another common mode of speaking is to say that $G$ is the \emph{cone} over $H$ and the notation $G=H \vee K_{1}$ is used accordingly.

The pineapple graph is in fact a cone over the disjoint union of a clique and a stable set. As we have seen, for the pineapple graph the Wilf method works very well. 

However, in other cases, it may yield poor results. Therefore we shall now develop an alternative method of estimating $S^{2}$ specifically for cones and then illustrate its power by an example.



\begin{thm}\cite{Gol14}
Let $G$ be a connected graph on vertices $\{1,2,\ldots,n\}$ with Perron vector $v$, normalized so that $||v||_{2}=1$. For every vertex $i \in V(G)$ let $H_{i}=G-\{i\}$ be the subgraph obtained by deleting $i$ from $G$ and let $\rho_{H_{i}}$ be its spectral radius. Then for any $1 \leq i \leq n$:
\begin{equation}\label{eq:felix}
v_{i}^{2} \geq \frac{1}{1+\frac{d_{i}}{(\rho-\rho_{H_{i}})^{2}}}.
\end{equation}
\end{thm}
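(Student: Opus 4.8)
The plan is to deduce the inequality \eqref{eq:felix} from the a posteriori bound \eqref{eq:aposteriori}, mimicking the proof of Theorem~\ref{thm:main} but now choosing the ``nearby vector'' $y$ to be a principal eigenvector of the subgraph $H_i=G-\{i\}$, padded with a zero in the $i$-th coordinate. First I would let $w$ be the Perron unit vector of $A(H_i)$ and set $y \in \mathbb{R}^n$ equal to $w$ on $V(G)\setminus\{i\}$ and $y_i=0$. Since $H_i$ is an induced subgraph, the submatrix of $A=A(G)$ obtained by deleting row and column $i$ is exactly $A(H_i)$, so $Ay$ agrees with $\rho_{H_i} w$ on all coordinates except the $i$-th, where $(Ay)_i = \sum_{j\sim i} y_j$. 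Hence $\varrho(y)= y^T A y / y^T y = \rho_{H_i}$ (the $i$-th coordinate contributes nothing because $y_i=0$), and the residual $r(y)=Ay-\varrho(y)y$ is supported entirely on coordinate $i$, with $r(y)_i = \sum_{j\sim i}y_j$. Therefore $\|r(y)\|/\|y\| = |\sum_{j\sim i} y_j|$, which by Cauchy--Schwarz is at most $\sqrt{d_i}\,\|y\|_2 = \sqrt{d_i}$.

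Next I would compute the angle between the Perron vector $v$ of $G$ and this vector $y$. We have $\langle v,y\rangle = \sum_{j\neq i} v_j w_j$, and since all entries of $v$ and $w$ are positive, $\cos\angle(v,y) = \langle v,y\rangle$. The key observation is that $\|v\|_2^2 = 1$ means the coordinates of $v$ restricted to $V(G)\setminus\{i\}$ have squared norm $1-v_i^2$, so we can write $v$ (off coordinate $i$) as $\sqrt{1-v_i^2}$ times a unit vector $u$, and then $\cos\angle(v,y) = \sqrt{1-v_i^2}\,\langle u,w\rangle$. This gives $\sin^2\angle(v,y) = 1 - (1-v_i^2)\langle u,w\rangle^2$ and $\tan^2\angle(v,y) = \frac{1-(1-v_i^2)\langle u,w\rangle^2}{(1-v_i^2)\langle u,w\rangle^2}$. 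Plugging everything into \eqref{eq:aposteriori} with $\lambda=\rho$ and $\varrho(y)=\rho_{H_i}$ yields
$$
(\rho-\rho_{H_i})^2 \leq d_i \cdot \frac{1-(1-v_i^2)\langle u,w\rangle^2}{(1-v_i^2)\langle u,w\rangle^2}.
$$
Rearranging, $(1-v_i^2)\langle u,w\rangle^2 \left(1 + \frac{(\rho-\rho_{H_i})^2}{d_i}\right) \leq 1$, and since $\langle u,w\rangle \leq 1$ we may drop it (it only helps), obtaining $(1-v_i^2)\left(1+\frac{(\rho-\rho_{H_i})^2}{d_i}\right) \leq \frac{1}{\langle u,w\rangle^2}$. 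Actually, to get the clean stated bound I want the inequality to go the other way, so I would instead not discard $\langle u,w\rangle$ but argue it is close to $1$ — or, more cleanly, rewrite $v_i^2 \geq 1 - \frac{1}{\langle u,w\rangle^2(1+(\rho-\rho_{H_i})^2/d_i)}$ and then use $\langle u,w\rangle \le 1$... which unfortunately points the wrong way.

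The resolution, and the step I expect to be the main obstacle, is handling the factor $\langle u,w\rangle^2$ correctly: one needs a lower bound on it, not an upper bound. Here the right move is to avoid introducing $u$ at all and instead apply \eqref{eq:aposteriori} in the reversed roles, or to observe directly that since $w$ is the \emph{optimal} unit vector for the Rayleigh quotient on the subspace $\{y:y_i=0\}$, the vector $v$ restricted-and-renormalized on that subspace has Rayleigh quotient at most $\rho_{H_i}$, which by a Temple/Kato-type argument forces $\langle u,w\rangle$ to be bounded below in terms of the same spectral gap. A cleaner path is to simply choose $y = v - v_i e_i$ (the Perron vector of $G$ with its $i$-th coordinate zeroed out) rather than the subgraph's eigenvector: then $\|y\|_2^2 = 1-v_i^2$, $\cos\angle(v,y)=\sqrt{1-v_i^2}$ exactly, $\varrho(y) = \frac{v^TAv - 2v_i(Av)_i + v_i^2 A_{ii}}{1-v_i^2} = \frac{\rho - 2v_i \rho v_i}{1-v_i^2} = \frac{\rho(1-2v_i^2)}{1-v_i^2} \le \rho_{H_i}$ by interlacing (Rayleigh quotient of an $H_i$-vector), and $r(y) = Ay - \varrho(y)y$ has $\|r(y)\|^2/\|y\|^2$ computable from $\|Ay\|^2$, with the $i$-th coordinate of $Ay$ equal to $\sum_{j\sim i}v_j \le \rho v_i$ again by $Av=\rho v$. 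Tracking these and using $\tan^2\angle(v,y) = \frac{v_i^2}{1-v_i^2}$ should, after the routine algebra, collapse to exactly \eqref{eq:felix}; the delicate point throughout is making sure each inequality ($\varrho(y)\le\rho_{H_i}$, the Cauchy--Schwarz step, and the sign when clearing denominators) points in the direction that produces a \emph{lower} bound on $v_i^2$.
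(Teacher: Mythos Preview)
First, note that the paper does not actually prove this theorem: it is quoted from \cite{Gol14} and used as a black box. So there is no ``paper's own proof'' to compare against here.

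That said, your proposal does not close. You correctly diagnose that the first choice $y=$ (Perron vector of $H_i$ padded with a zero) fails: from \eqref{eq:aposteriori} you obtain an upper bound on $c^{2}=\langle v,y\rangle^{2}$, while Cauchy--Schwarz only gives $c^{2}\le 1-v_i^{2}$; the two inequalities cannot be chained into a lower bound on $v_i^{2}$.

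The second choice $y=v-v_i e_i$ does not rescue the argument either, and this is where the real gap lies. Carrying out the ``routine algebra'' you allude to: with $\|y\|^{2}=1-v_i^{2}$ and $\varrho(y)=\rho(1-2v_i^{2})/(1-v_i^{2})$ one finds, after computing $Ay$ coordinatewise and simplifying,
\[
\|r(y)\|^{2}=v_i^{2}\bigl(\rho\,\varrho(y)+d_i\bigr),\qquad \tan^{2}\angle(v,y)=\frac{v_i^{2}}{1-v_i^{2}}.
\]
Plugging into \eqref{eq:aposteriori} and using $\rho-\rho_{H_i}\le \rho-\varrho(y)$ gives
\[
\rho-\rho_{H_i}\;\le\;\frac{v_i^{2}\sqrt{\rho\,\varrho(y)+d_i}}{1-v_i^{2}},
\]
which is \emph{not} the inequality $(\rho-\rho_{H_i})^{2}(1-v_i^{2})\le d_i v_i^{2}$ equivalent to \eqref{eq:felix}: the dependence on $\rho-\rho_{H_i}$ is linear rather than quadratic, and an extraneous $\rho\,\varrho(y)$ term appears. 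No amount of rearranging removes this discrepancy; the a~posteriori bound applied to this $y$ simply does not produce the stated estimate.

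A clean route that does work (and is presumably what \cite{Gol14} does) bypasses Rayleigh-quotient perturbation altogether. Writing $B=A(H_i)$ and $a=(A_{ji})_{j\ne i}$, the eigenvector equation $Av=\rho v$ restricted to coordinates $j\ne i$ reads $(\rho I-B)v'=v_i a$, where $v'$ is $v$ with the $i$-th entry removed. Since $\rho>\rho_{H_i}$, the matrix $\rho I-B$ is positive definite, so $v'=v_i(\rho I-B)^{-1}a$ and hence
\[
1-v_i^{2}=\|v'\|^{2}=v_i^{2}\,a^{T}(\rho I-B)^{-2}a\;\le\;\frac{v_i^{2}\,\|a\|^{2}}{(\rho-\rho_{H_i})^{2}}=\frac{d_i\,v_i^{2}}{(\rho-\rho_{H_i})^{2}},
\]
because every eigenvalue $\mu$ of $B$ satisfies $\rho-\mu\ge\rho-\rho_{H_i}$. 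This is exactly \eqref{eq:felix}.
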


\begin{lem}\label{lem:d1}
Let $G$ be a graph with vertex set $\{1,2,\ldots,n\}$ and suppose that $1$ is a  dominating vertex. Then 
\begin{equation}\label{eq:S}
S=(\rho+1)v_{1}.
\end{equation}
\end{lem}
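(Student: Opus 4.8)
The plan is to exploit the eigenvalue equation $Av = \rho v$ read off at the dominating vertex $1$. Since vertex $1$ is adjacent to every other vertex, the first coordinate of $Av = \rho v$ reads $\sum_{j \neq 1} v_j = \rho v_1$, i.e. $S - v_1 = \rho v_1$, because $S = \|v\|_1 = \sum_{j=1}^n v_j$ and every $v_j$ with $j \neq 1$ appears in the sum with coefficient $1$. Rearranging gives $S = (\rho + 1) v_1$, which is exactly \eqref{eq:S}.

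The one point that needs a word of care is the use of $\|v\|_1 = \sum_j v_j$ without absolute values: this is legitimate because $G$ is connected, so by Perron--Frobenius the Perron vector $v$ can be (and is, per the normalization in the quoted theorem) taken to have all strictly positive entries. Hence $S = \sum_j |v_j| = \sum_j v_j$, and in particular each off-diagonal term contributing to row $1$ of $Av$ is a genuine $+v_j$. There is no real obstacle here; the only thing to be vigilant about is not to double-count or to mishandle the $j=1$ term, which is why the bookkeeping is written as $\sum_{j \neq 1} v_j = S - v_1$.

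So the proof is essentially two lines: write down the first row of the eigen-equation, substitute $\sum_{j\neq 1} v_j = S - v_1$, and solve for $S$. I would present it as: since $1$ dominates, the first entry of $Av = \rho v$ gives $S - v_1 = \rho v_1$; rearranging yields $S = (\rho+1)v_1$. One could optionally remark that, combined with \eqref{eq:felix} applied at $i=1$ (where $d_1 = n-1$ and $H_1 = H$), this immediately produces a lower bound on $S^2 = (\rho+1)^2 v_1^2$ in terms of $\rho$, $n$, and $\rho_H$, which is presumably the use to which the lemma will be put in the cone examples that follow.
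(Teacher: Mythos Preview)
Your proof is correct and matches the paper's own argument essentially line for line: both read off the first coordinate of $Av=\rho v$, use that vertex $1$ is adjacent to all others to get $\rho v_{1}=\sum_{j\neq 1}v_{j}=S-v_{1}$, and rearrange. Your added remark on the positivity of $v$ (justifying $\|v\|_{1}=\sum_{j}v_{j}$) is a nice clarification that the paper leaves implicit.
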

\begin{proof}
Consider the first entries of both sides of the equation $Av=\rho v$:
$$\rho v_{1}=(Av)_{1}=\sum_{j=2}^{n}{v_{j}}=S-v_{1}$$
\end{proof}

Now, combining \eqref{eq:felix} and \eqref{eq:S} we immediately obtain:
\begin{thm}\label{thm:domvert}
Let $G=H \vee K_{1}$ and let $\rho_{H}$ be the spectral radius of $H$. If $G$ has $n$ vertices, then:
\begin{equation}\label{eq:est}
S^{2} \geq \frac{(\rho+1)^{2}(\rho-\rho_{H})^{2}}{(\rho-\rho_{H})^{2}+n-1}.
\end{equation}
\end{thm}

Note that the right-hand side of \eqref{eq:est} is a decreasing function of $\rho-\rho_{H}$ and therefore we can estimate it from below, in turn, by using bounds of the form $\rho \geq a$ and $\rho_{H} \leq b$, to obtain:
$$
S^{2} \geq \frac{(a+1)^{2}(a-b)^{2}}{(a-b)^{2}+n-1}.
$$

\begin{expl}
Let $G=P_{20} \vee K_{1}$ be the cone over the path on $20$ vertices. To fairly compare the bounds on $S^{2}$ provided by \eqref{eq:wilf} and \eqref{eq:est} we will use  Hofmeister's  \cite{Hof88} bound $\rho \geq \sqrt{\frac{1}{n}\sum_{i=1}^{n}{d_{i}^{2}}}$ for both. Since the degrees of $G$ are: $n-1$, $3$ repeated $n-3$ times, and $2$ repeated twice, we have:
$$
\rho \geq \sqrt{\frac{1}{n}(n^2+7n-18)}=5.21.
$$
Thus, \eqref{eq:wilf} yields:
$$
S^{2} \geq \frac{3}{2}\rho \geq 7.815,
$$
whereas \eqref{eq:est} yields, using $\rho_{H} \leq \Delta(H)=2$:
$$
S^{2} \geq 13.115.
$$

The actual value of $S^{2}$ in this example is $16.8305$ while the right-hand side of \eqref{eq:est} is $16.5815$. 

\end{expl}


\section{Acknowledgments}
I wish to thank Dr. Clive Elphick for illuminating correspondences on the subject of this paper and Professor Martin Golumbic for a careful reading of a first draft. 

\bibliographystyle{abbrv}
\bibliography{nuim}

\providecommand{\noopsort}[1]{}
\begin{thebibliography}{10}

\bibitem{Extremal16}
M.~Aouchiche, F.~K. Bell, D.~Cvetkovi\'{c}, P.~Hansen, P.~Rowlinson, S.~K.
  Simi\'{c}, and D.~Stevanovi\'{c}.
\newblock Variable neighborhood search for extremal graphs. {XVI}. {S}ome
  conjectures related to the largest eigenvalue of a graph.
\newblock {\em European J. Oper. Res.}, 191:661--676, 2008.

\bibitem{Bel92}
F.~K. Bell.
\newblock Eigenvalues and degree deviation in graphs.
\newblock {\em Linear Algebra Appl.}, 161:45--54, 1992.

\bibitem{BorGryGutPet03}
B.~Borovi{\'c}anin, S.~Gr{\"u}newald, I.~Gutman, and M.~Petrovi{\'c}.
\newblock Harmonic graphs with small number of cycles.
\newblock {\em Discrete Math.}, 265(1--3):31--44, 2003.

\bibitem{Spectra_BH}
A.~E. Brouwer and W.~H. Haemers.
\newblock {\em Spectra of Graphs}, volume 223 of {\em Universitext}.
\newblock Springer, 2012.

\bibitem{CioGre07}
S.~M. Cioab\u{a} and D.~A. Gregory.
\newblock Large matchings from eigenvalues.
\newblock {\em Linear Algebra Appl.}, 422(1):308--317, 2007.

\bibitem{ColSin57}
L.~Collatz and U.~Sinogowitz.
\newblock Spekter endlicher {G}rafen.
\newblock {\em Abh. Math. Sem. Univ. Hamburg}, 21:63--77, 1957.

\bibitem{DreGut03}
A.~Dress and I.~Gutman.
\newblock The number of walks in a graph.
\newblock {\em Appl. Math. Lett.}, 16:797--801, 2003.

\bibitem{ElpWoc13}
C.~Elphick and P.~Wocjan.
\newblock New measures of graph iregularity.
\newblock \url{http://arxiv.org/abs/1305.3570v4}, 2013.

\bibitem{Gol14}
F.~Goldberg.
\newblock A lower bound on the entries of the principal eigenvector of a graph.
\newblock \url{http://arxiv.org/abs/1403.1479}, 2014.

\bibitem{Gru02}
S.~Gr{\"u}newald.
\newblock Harmonic trees.
\newblock {\em Appl. Math. Lett.}, 15(8):1001--1004, 2002.

\bibitem{GutDas04}
I.~Gutman and K.~C. Das.
\newblock The first {Z}agreb index 30 years after.
\newblock {\em MATCH Commun. Math. Comput. Chem.}, 50:83--92, 2004.

\bibitem{Hof88}
M.~Hofmeister.
\newblock Spectral radius and degree sequence.
\newblock {\em Math. Nachr.}, 139:37--44, 1988.

\bibitem{Nik06}
V.~Nikiforov.
\newblock Eigenvalues and degree deviation in graphs.
\newblock {\em Linear Algebra Appl.}, 414(1):347--360, 2006.

\bibitem{Nik06walk}
V.~Nikiforov.
\newblock Walks and the spectral radius of graphs.
\newblock {\em Linear Algebra Appl.}, 418(1):257--268, 2006.

\bibitem{Nik07}
V.~Nikiforov.
\newblock Bounds on graph eigenvalues {II}.
\newblock {\em Linear Algebra Appl.}, 427(2--3):183--189, 2007.

\bibitem{Nik07pow}
V.~Nikiforov.
\newblock The sum of the squares of degrees: {S}harp asymptotics.
\newblock {\em Discrete Math.}, 307(24):3187--3193, 2007.

\bibitem{Pop35}
T.~Popoviciu.
\newblock Sur les \'{e}quations alg\'{e}briques ayant toutes leurs racines
  r\'{e}elles.
\newblock {\em Mathematica, Cluj}, 9:129--145, 1935.

\bibitem{Wilf86}
H.~S. Wilf.
\newblock Spectral bounds for the clique and independence numbers of graphs.
\newblock {\em J. Comb. Theory, Ser. B}, 40:113--117, 1986.

\bibitem{ZhuArgKny13}
P.~Zhu, M.~E. Argentati, and A.~V. Knyazev.
\newblock Bounds for the {R}ayleigh quotient and the spectrum of self-adjoint
  operators.
\newblock {\em SIAM J. Matrix Anal. Appl.}, 34(1):244--256, 2013.

\end{thebibliography}
\end{document}